%%%%%%%%%%%%%%%%%%%%%%%%%%%%%%%%%%%%%%%%%%%%%%%%%%%%%%%%%%%%%%%%%%%%%%%%%%%%
%% Trim Size: 9.75in x 6.5in
%% Text Area: 8in (include Runningheads) x 5in
%% ws-jktr.tex   :   26-6-08
%% Tex file to use with ws-jktr.cls written in Latex2E. 
%% The content, structure, format and layout of this style file is the 
%% property of World Scientific Publishing Co. Pte. Ltd. 
%% Copyright 1995, 2002 by World Scientific Publishing Co. 
%% All rights are reserved.
%%%%%%%%%%%%%%%%%%%%%%%%%%%%%%%%%%%%%%%%%%%%%%%%%%%%%%%%%%%%%%%%%%%%%%%%%%%%
%

\documentclass{ws-jktr}
\usepackage{hyperref}
\usepackage{float}
\floatstyle{plaintop}
\restylefloat{table}

\newcommand{\C}{\mathbb{C}}

\newcommand{\Z}{\mathbb{Z}}

\begin{document}

\markboth{Ji-young Ham, A. D. Mednykh, V. S. Petrov}
{Volumes of the hyperbolic twist knot cone-manifolds}

%%%%%%%%%%%%%%%%%%%%% Publisher's Area please ignore %%%%%%%%%%%%%%
\catchline{}{}{}{}{}
%%%%%%%%%%%%%%%%%%%%%%%%%%%%%%%%%%%%%%%%%%%%%%%%%%%%%%%%%%%%%%%%%%%

\title{Trigonometric identities and volumes of the hyperbolic twist knot cone-manifolds}

\author{Ji-Young Ham}

\address{ Department of Science, Hongik University, \\
94 Wausan-ro, Mapo-gu, Seoul,
 121-791, Korea.\\
jiyoungham1@gmail.com}

\author{Alexander Mednykh\footnote{The author is partially supported by Laboratory of Quantum Topology of Chelyabinsk State University (Russian Federation government grant 14.Z50.31.0020).}}

\address{Sobolev Institute  of Mathematics,  pr. Kotyuga 4, Novosibirsk  630090,  \\
Novosibirsk State University,  Pirogova 2, Novosibirsk  630090,\\
Chelyabinsk State University, Bratyev Kashirinykh 129, Chelyabinsk 454001, Russia.\\
mednykh@math.nsc.ru}

\author{Vladimir Petrov}

\address{Microsoft Corporation, \\
 One Microsoft Way
Redmond, WA 98052-7329
USA. \\
vpetrov@microsoft.com}

\maketitle

\begin{abstract}
 We calculate the volumes of the hyperbolic twist knot cone-manifolds using the Schl\"{a}fli formula. Even though general ideas for calculating the volumes of cone-manifolds are around, since there is no concrete calculation written, we present here the concrete calculations.
We express the length of the singular locus in terms of the distance between the two axes fixed by two generators. In this way the calculation becomes easier than using the singular locus directly.
The volumes of the hyperbolic 
twist knot cone-manifolds simpler than Stevedore's knot are known. 
As an application, we give the volumes of the cyclic coverings over the hyperbolic twist knots.
\end{abstract}

%\keywords{List four to six; Should characterize article.}
\keywords{hyperbolic orbifold, hyperbolic cone-manifold, volume, complex distance, twist knot, orbifold covering.}

\ccode{Mathematics Subject Classification 2000: 57M25, 57M27}

\section{Introduction}

Thurston~\cite[Chapter 5]{T1} showed that a holonomy representation 
$h_{\infty}$ of the group of a hyperbolic knot $K$ in  $\text{PSL}(2,\C)$ can be deformed into
a one-parameter family $\{h_{\alpha}\}$ of representations  to give a corresponding one-parameter family $\{C_{\alpha}\}$ of singular complete hyperbolic manifolds, the hyperbolic \emph{cone-manifolds} of a knot $K$.   
Let $m$ be a meridian of $K$. Kojima~\cite{K1} showed further that $C_{\alpha}$ is totally determined by the action of 
$h_{\alpha}(m)$ which is a rotation of angle $\alpha$ around the fixed axis of 
$h_{\alpha}$. A point on $K$ of the cone-manifold  
$C_{\alpha}$ is in the core of a neighborhood isometric to a cylinder made of an angle 
$\alpha$ wedge by identifying the two boundaries. The $\alpha$ is called a 
\emph{cone-angle} along $K$. A point off $K$ has a neighborhood isometric to a neighborhood in $\mathbb{H}^3$.
We consider the complete hyperbolic structure on the knot complement as the cone-manifold structure of cone-angle zero.

As we mentioned, if we increase the cone-angle from zero and if we keep the angle small, we get a one-parameter family of hyperbolic cone-manifolds. 
 Similarly, for a link $K$ having $n$ components, we can get an $n$-parameter family of hyperbolic cone-manifolds of a link $K$.
In particular, for each two-bridge hyperbolic link, there exists an angle $\alpha_0 \in [\frac{2\pi}{3},\pi)$ for each link $K$ such that $C_{\alpha}$ is hyperbolic for $\alpha \in (0, \alpha_0)$, Euclidean for $\alpha=\alpha_0$, and spherical for $\alpha \in (\alpha_0, \pi]$ \cite{P2,HLM1,K1,PW}. 

Explicit volume formulae for hyperbolic cone-manifolds of knots and links are known only for a few cases. The volume formulae for hyperbolic cone-manifolds of the knot 
$4_1$~\cite{HLM1,K1,K2,MR1}, the knot $5_2$~\cite{M2}, the link $5_1^2$~\cite{MV1}, 
the link $6_2^2$~\cite{M1}, and the link $6_3^2$~\cite{DMM1} have been calculated. In~\cite{HLM2} a method of calculating the volumes of two-bridge knot cone-manifolds were introduced but without explicit formulae. 

The main purpose of the paper is to find explicit and efficient volume formula for hyperbolic twist knot cone-manifolds. The following theorem gives the formula for $T_m$ for even integers $m$. For odd integers $m$, we can replace $T_m$ by $T_{-m-1}$ as explained in Section~\ref{sec:twist}. So, the following theorem actually covers all possible hyperbolic twist knots. But for the volume formula, since the knot $T_{2n}$ has to be hyperbolic, we exclude the case when $n=0,\ -1$.

\begin{theorem}\label{thm:main}
Let $T_{2n}$ be a hyperbolic twist knot. Let $T_{2n}(\alpha)$, $0 \leq \alpha < \alpha_0$ be the hyperbolic cone-manifold with underlying space $S^3$ and with singular set $T_{2n}$ of cone-angle $\alpha$. Then the volume of $T_{2n}(\alpha)$ is given by the following formula

\begin{align*}
\text{\textnormal{Vol}} \left(T_{2n}(\alpha)\right) &= \int_{\alpha}^{\pi} \log \left|\frac{A+iV}{A-iV}\right| \: d\alpha,
\end{align*}

\noindent where 
for $A=\cot{\frac{\alpha}{2}}$, $V$ with $\text{\textnormal{Im}}(V) \leq 0$ is a zero of the complex distance polynomial $P_{2n}=P_{2n}(V,B)$ which is given recursively by 

\medskip
\begin{equation*}
P_{2n} = \begin{cases}
 \left(\left(4 B^4-8 B^2+4\right) V^2-4 B^4+8 B^2-2\right) P_{2(n-1)} -P_{2(n-2)}, \ 
\text{if $n>1$}, \\
 \left(\left(4 B^4-8 B^2+4\right) V^2-4 B^4+8 B^2-2\right) P_{2(n+1)}-P_{2(n+2)},  \
\text{if $n<-1$},
\end{cases}
\end{equation*}
\medskip

\noindent with initial conditions
\begin{equation*}
\begin{split}
P_{-2} (V,B) & =\left(2 B^2-2\right) V+2 B^2-1,\\
P_{0} (V,B) & = 1, \\    
P_{2} (V,B) & =\left(4 B^4-8 B^2+4\right) V^2+\left(2-2 B^2\right) V-4 B^4+6 B^2-1, \\
\end{split}
\end{equation*}
\noindent where $B=\cos{\frac{\alpha}{2}}$.
\end{theorem}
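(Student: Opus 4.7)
The plan is to apply the Schl\"afli formula for cone-manifolds,
\[
\frac{d\,\text{Vol}(T_{2n}(\alpha))}{d\alpha} \;=\; -\frac{\ell(\alpha)}{2},
\]
where $\ell(\alpha)$ is the length of the singular locus, and then to express $\ell(\alpha)$ explicitly via a single complex parameter adapted to the two-bridge structure. First I fix a standard two-generator presentation $\pi_1(S^3\setminus T_{2n}) = \langle s,t \mid s\,w_n = w_n\,t\rangle$ of the twist knot group, where $w_n$ is a word in $s^{\pm1}, t^{\pm1}$ whose length grows linearly in $|n|$. I lift the Thurston holonomy $h_\alpha$ to $\mathrm{SL}(2,\mathbb{C})$ so that $h_\alpha(s)$ and $h_\alpha(t)$ are rotations of angle $\alpha$ about two skew axes in $\mathbb{H}^3$, and let $\gamma$ denote the complex distance between those axes. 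Choosing coordinates so that $\gamma$ is encoded by a single variable $V=V(\alpha)$, together with $B=\cos(\alpha/2)$, reduces the entire geometric problem to tracking $(V,B)$.

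The next step is to translate the relator $s\,w_n = w_n\,t$ into an $\mathrm{SL}(2,\mathbb{C})$ matrix equation and eliminate all coordinates except $V$. The crucial observation is that $w_n$ differs from $w_{n-1}$ by multiplication by a fixed two-letter syllable, and so the relator matrices satisfy a Chebyshev-type three-term recursion in which the scalar factor is precisely $(4B^4-8B^2+4)V^2-4B^4+8B^2-2$. This produces the recurrence
\[
P_{2n} \;=\; \bigl((4B^4-8B^2+4)V^2-4B^4+8B^2-2\bigr)\,P_{2(n-1)} \;-\; P_{2(n-2)}.
\]
The base cases $P_0, P_{\pm 2}$ are verified directly from the matrix computation for the figure-eight and $5_2$ knots, and the $n<-1$ branch is obtained from the mirror symmetry $T_{2n}\leftrightarrow T_{-2n-1}$ discussed in Section~\ref{sec:twist}. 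The geometric root of $P_{2n}=0$ is singled out by the condition $\mathrm{Im}(V)\le 0$, which is continuous in $\alpha$ and matches the complete structure at $\alpha=0$.

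Finally I express $\ell(\alpha)$ in terms of $A=\cot(\alpha/2)$ and $V$ via a standard complex-distance calculation: using $\cosh\gamma$ written in terms of the matrix entries, one obtains
\[
\ell(\alpha) \;=\; 2\log\!\left|\frac{A+iV}{A-iV}\right|,
\]
which is the real length of the singular meridian. Integrating the Schl\"afli equation from $\alpha$ to $\pi$ and using $\text{Vol}(T_{2n}(\pi))=0$ (which follows by analytic continuation of the formula across the Euclidean transition $\alpha_0$ into the spherical range; note also that $A=\cot(\pi/2)=0$ makes the integrand vanish at the endpoint, so the boundary contribution is zero) yields the stated formula. The main obstacle is the derivation of $P_{2n}$: one must choose a normal form for the matrices so that the Chebyshev structure becomes manifest and the recurrence closes after a single elimination, and one must then justify that the branch $\mathrm{Im}(V)\le 0$ persists throughout the deformation. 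Once the polynomial and the length formula are in place, the theorem is a direct integration of Schl\"afli.
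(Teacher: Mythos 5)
Your overall strategy is the same as the paper's: Schl\"afli's formula, a trace recursion producing the complex distance polynomial, and a complex-distance identity converting $V$ into the longitude eigenvalue $L=M^{-2}\frac{A+iV}{A-iV}$ so that $\log|L|=\log\left|\frac{A+iV}{A-iV}\right|$. However, two steps as you state them would not go through. First, you ``integrate the Schl\"afli equation from $\alpha$ to $\pi$ and use $\mathrm{Vol}(T_{2n}(\pi))=0$ by analytic continuation across the Euclidean transition.'' The Schl\"afli formula is only available where the hyperbolic cone structure exists, i.e.\ on $(0,\alpha_0)$ with $\alpha_0\in[\tfrac{2\pi}{3},\pi)$; you cannot integrate it past $\alpha_0$, and the hyperbolic volume of the spherical cone-manifold $T_{2n}(\pi)$ is not something one continues analytically. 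The correct argument (and the one the paper uses) is to integrate from $\alpha$ to $\alpha_0$, where the volume degenerates to $0$, and then observe that the integrand $\log|L|$ vanishes \emph{identically} on the whole interval $(\alpha_0,\pi]$ because all characters are real there (Porti--Weiss), so the upper limit may be pushed to $\pi$ without changing the value. Your remark that $A=\cot(\pi/2)=0$ kills the integrand only handles the single point $\alpha=\pi$, not the interval $(\alpha_0,\pi)$, so as written the extension of the integral to $\pi$ is unjustified.

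Second, you claim the $n<-1$ branch of the recursion ``is obtained from the mirror symmetry $T_{2n}\leftrightarrow T_{-2n-1}$.'' That symmetry relates an even-indexed twist knot to an odd-indexed one and is used in the paper only to reduce all twist knots to the even-index family $T_{2n}$; it does not relate $T_{2n}$ to $T_{-2n}$ and cannot produce the negative-$n$ recursion. In the paper both branches come from running the Cayley--Hamilton trace identity $\mathrm{tr}(SU^{n}c)=\mathrm{tr}(SU^{n\mp1}c)\,\mathrm{tr}(U)-\mathrm{tr}(SU^{n\mp2}c)$ in the two directions, after reducing the relator to $\mathrm{tr}(SWc)=0$ via an involution $c$ conjugating $S$ to $T^{-1}$ (and after dividing out the common factor $2i\sinh\frac{\rho}{2}\sin\frac{\alpha}{2}$ coming from reducible representations, which you omit). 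Two smaller points: the quantity $2\log\left|\frac{A+iV}{A-iV}\right|$ is the real length of the \emph{longitude} (the core geodesic), not of the meridian, whose holonomy is an elliptic rotation by $\alpha$; and selecting the geometric root requires more than $\mathrm{Im}(V)\le 0$, since $P_{2n}$ has many roots satisfying that condition --- the paper singles out the excellent component as the one of maximal volume among those passing through a zero of the discriminant, citing volume rigidity.
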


%%%%%%%%%%%%%%%%%%%%%%%%%%%%%%%%%%%%%%%%%%%%%%%%%%%%%%%%
%%%%%%%%%%%%%%%%%%%%%%%%%%%%%%%%%%%%%%%%%%%%%%%%%%%%%%%%

\section{Twist knots} \label{sec:twist}

\begin{figure} 
\begin{center}
\resizebox{4cm}{!}{\includegraphics[angle=90]{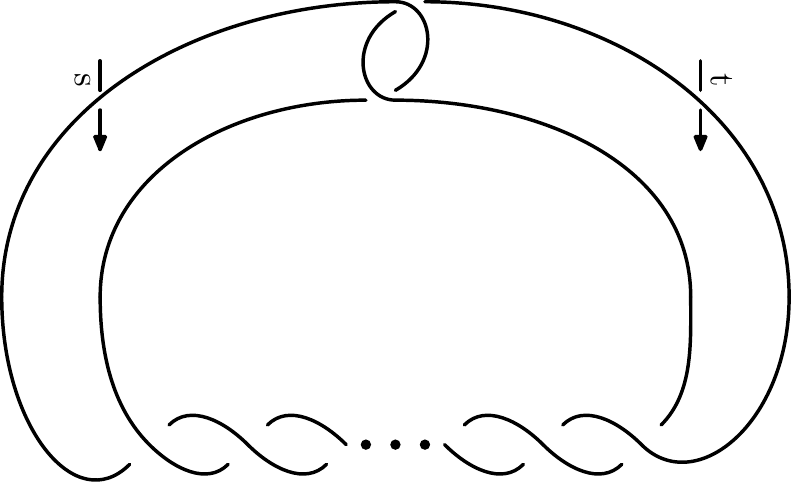}}
reflection
\reflectbox{\resizebox{4cm}{!}{\includegraphics[angle=90]{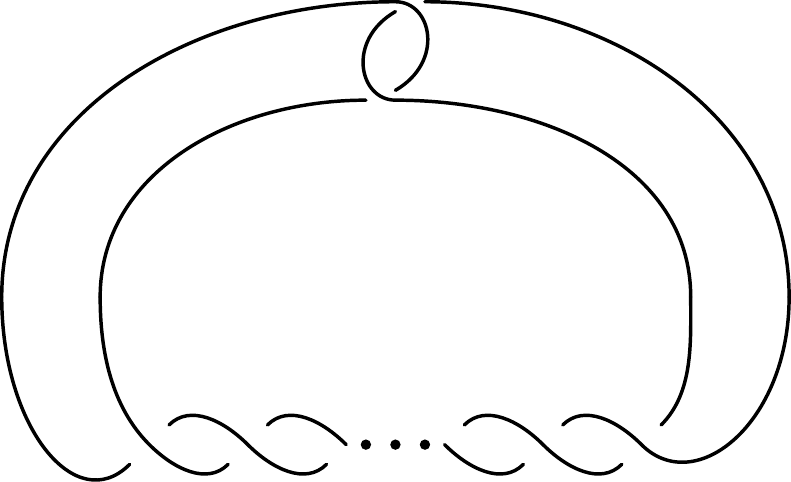}}}
\caption{A twist knot (left) and its mirror image (right)} \label{fig:T2n}
\end{center}
\end{figure}

\begin{figure}
\begin{center}
\includegraphics[angle=90]{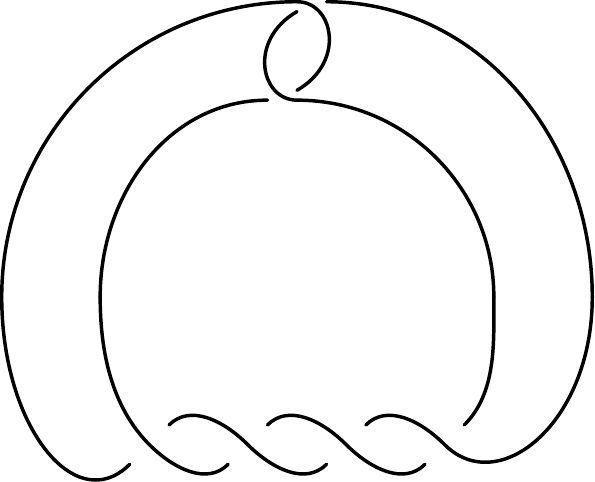}
\caption{The knot $6_1$}\label{fig:knot}
\end{center}
\end{figure}

A knot $K$ is a twist knot if $K$ has a regular two-dimensional projection of the form in Figure~\ref{fig:T2n}. For example, Figure~\ref{fig:knot} is knot $6_1$.
$K$ has 2 right-handed horizontal crossings and $m$ right-handed vertical crossings. We will denote it by $T_m$. Note that $T_m$ and its mirror image have the same fundamental group and hence have the same fundamental domain up to isometry in $\mathbb{H}^3$. It follows that $T_m(\alpha)$ and its mirror image have the same fundamental set up to isometry in $\mathbb{H}^3$ and have the same volume. So, we will make no distinction between $T_m$ and its mirror image because we are calculating volumes. Since the mirror image of $T_m$ is equivalent to $T_{-m-1}$, when $m$ is odd we will think $T_{-m-1}$ as $T_m$. Hence a twist knot can be represented by $T_{2n}$ for some integer $n$.

Let us denote by $X_{m}$ the exterior of $T_{m}$ in $S^3$. In~\cite[Proposition 1]{R1}, the fundamental group of two-bridge knots is presented. We will use the fundamental group of $X_{2n}$ in~\cite{HS}. In~\cite{HS}, the fundamental group of $X_{2n}$ is calculated with 2 left-handed horizontal crossings as positive crossings instead of two right-handed horizontal crossings. The following proposition is tailored to our purpose.

\begin{proposition}\label{thm:fundamentalGroup}
$$\pi_1(X_{2n})=\left\langle s,t \ |\ swt^{-1}w^{-1}=1\right\rangle,$$
where $w=(ts^{-1}t^{-1}s)^n$.
\end{proposition}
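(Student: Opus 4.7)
The plan is to apply the Wirtinger algorithm to the standard diagram of $T_{2n}$ in Figure~\ref{fig:T2n}, and then eliminate all generators except the two bridge meridians. The diagram has $2n+2$ crossings: two in the horizontal clasp at the bottom and $2n$ in the vertical twist box, giving a Wirtinger presentation on $2n+2$ generators. I would take $s$ and $t$ to be the Wirtinger generators associated with the two arcs at the top of the twist box, i.e.\ the two bridges of this two-bridge knot.

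First I would analyze a single \emph{full twist}, meaning two consecutive crossings in the vertical region. Composing the two Wirtinger relations at these crossings shows that the ordered pair of arcs entering the full twist equals the pair exiting it, conjugated by the word $u=ts^{-1}t^{-1}s$. Iterating this computation $n$ times along the twist box, the two arcs emerging at the bottom of the vertical region are $w s w^{-1}$ and $w t w^{-1}$ with $w=(ts^{-1}t^{-1}s)^n$; for $n<0$, replacing each crossing by its inverse yields the same formula with the corresponding negative power.

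Next I would close up the picture through the clasp. The two Wirtinger relations at the horizontal crossings, after substituting the expressions for the arcs leaving the twist region, give two relations in $s$ and $t$; one is a consequence of the other (the standard Wirtinger redundancy for a knot), and the remaining one simplifies, after obvious cancellations, to $s\,w\,t^{-1}w^{-1}=1$. Since every intermediate arc of the twist region is already expressed as an iterated conjugate of $s$ or $t$ by a subword of $w$, the intermediate generators may all be eliminated, producing the claimed two-generator one-relator presentation.

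The principal obstacle is essentially notational bookkeeping: one must track the orientations of the arcs and the handedness of the crossings with care, so that the factor produced by a single full twist is precisely $ts^{-1}t^{-1}s$ rather than its inverse or a cyclic variant such as $st^{-1}s^{-1}t$. Because~\cite{HS} derives an analogous presentation under the convention that the two horizontal crossings are left-handed, one also needs to verify that flipping that convention gives exactly the word above; this is immediate. A sanity check at $n=1$ recovers the classical presentation of the figure-eight knot group and at $n=-2$ that of Stevedore's knot $6_1$ of Figure~\ref{fig:knot}, confirming that the conventions have been assembled correctly.
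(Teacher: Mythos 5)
Your Wirtinger-and-eliminate derivation is a legitimate proof strategy, but it is not the paper's: the paper gives no diagrammatic argument at all, simply quoting the two-bridge knot group presentation of Riley \cite{R1} as specialized to twist knots by Hoste and Shanahan \cite{HS}, and adjusting for the switch from two left-handed to two right-handed horizontal crossings. Your route buys self-containedness (no appeal to the Schubert normal form or to the $\epsilon_i=(-1)^{\lfloor iq/p\rfloor}$ recipe for the relator word), but at the cost that the one step you defer as ``notational bookkeeping'' --- checking that a single full twist between the two antiparallel strands conjugates the ordered pair of meridians by exactly $ts^{-1}t^{-1}s$, with the paper's labelling of $s$ on the bottom bridge and $t$ on the top --- is the entire mathematical content of the proposition; everything else is formal. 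Your iteration step also deserves one explicit line: if one full twist sends $(s,t)$ to $(usu^{-1},utu^{-1})$ with $u=u(s,t)$, then the conjugator for the next full twist is the same word evaluated at the conjugated meridians, which equals $u\,u(s,t)\,u^{-1}=u$, so $n$ full twists conjugate by $u^{n}$; without this remark, ``iterating the computation $n$ times'' does not obviously produce conjugation by a single power $w=u^n$.

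A concrete warning: your sanity check is miscalibrated, which matters precisely because such checks are the mechanism for catching convention errors here. With the paper's conventions, Stevedore's knot $6_1$ of Figure~\ref{fig:knot} is $T_4$, i.e.\ $n=2$ (its complement has volume $3.16396$, matching Table~\ref{tab1}), whereas $n=-2$ gives $T_{-4}=T_{3}=5_2$, of volume $2.82812$. The $n=1$ check against the figure-eight knot is correct, but you should redo the $n=-2$ case against $5_2$ rather than $6_1$ before trusting the sign and ordering conventions in your half-twist computation.
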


We remark here that $s$ of Proposition~\ref{thm:fundamentalGroup} is the meridian which winds around the bottom arc of the twist knot in Figure~\ref{fig:T2n} and $t$ is the one that does the top arc as in Figure~\ref{fig:T2n}.

%%%%%%%%%%%%%%%%%%%%%%%%%%%%%%%%%%%%%%%%%%%%%%%%%%%%%%%%%%%%%
%%%%%%%%%%%%%%%%%%%%%%%%%%%%%%%%%%%%%%%%%%%%%%%%%%%%%%%%%%%%%
\section{The complex distance polynomial and A-polynomial} \label{sec:poly}
Let $R=\text{Hom}(\pi_1 (X_{2n}), \text{SL}(2, \C))$. 
Given  a set of generators, $s,t$, of the fundamental group for 
$\pi_1 (X_{2n})$, we define
 a set $R\left(\pi_1 (X_{2n})\right) \subset \text{SL}(2, \C)^2 \subset \C^{8}$ to be the set of
 all points $(\eta(s),\eta(t))$, where $\eta$ is a
 representaion of $\pi_1 (X_{2n})$ into $\text{SL}(2, \C)$. Since the defining relation of 
 $\pi_1 (X_{2n})$ gives the defining equation of $R\left(\pi_1 (X_{2n})\right)$~\cite{R3}, $R\left(\pi_1 (X_{2n})\right)$ is an affine algebraic set in $\C^{8}$. 
$R\left(\pi_1 (X_{2n})\right)$ is well-defined up to isomorphisms which arise from changing the set of generators. We say elements in $R$ which differ by conjugations in $\text{SL}(2, \C)$ are \emph{equivalent}. 

 We use two coordinates to give the structure of the affine algebraic set to $R\left(\pi_1 (X_{2n})\right)$. Equivalently, for some $O \in \text{SL}(2, \C)$, we consider both $\eta$ and $\eta^{\prime}=O^{-1} \eta O$:

For the complex distance polynomial, we use for the coordinates

$$\eta(s)=\left[\begin{array}{cc}
  ({M+1/M})/2    &  e^{\frac{\rho}{2}}   ({M-1/M})/2     \\
       e^{-\frac{\rho}{2}} ({M-1/M})/2    &  ({M+1/M})/2
                     \end{array} \right],$$

\medskip

$$\eta(t)=\left[\begin{array}{cc}
         ({M+1/M})/2 &  e^{-\frac{\rho}{2}} ({M-1/M})/2       \\
          e^{\frac{\rho}{2}} ({M-1/M})/2    &   ({M+1/M})/2
                 \end{array}  \right],$$

and for the A-polynomial, 
\begin{center}
$$\begin{array}{ccccc}
\eta^{\prime}(s)=\left[\begin{array}{cc}
                       M &       1 \\
                        0      & M^{-1}  
                     \end{array} \right]                          
\text{,} \ \ \
\eta^{\prime}(t)=\left[\begin{array}{cc}
                   M &  0      \\
                   t      & M^{-1} 
                 \end{array}  \right].
\end{array}$$
\end{center}

%%%%%%%%%%%%%%%%%%%%%%%%%%%%%%%%%%%%%%%%%%%%%%

\subsection{The complex distance polynomial}
Since we are interested in the excellent component (the geometric component) of $R\left(\pi_1(X_{2n})\right)$, 
in this subsection we set 
$M=e^{\frac{i \alpha}{2}}$.
Given the fundamental group of a twist knot
$$\pi_1(X_{2n})=\left \langle s,t \ |\  swt^{-1}w^{-1}=1 \right \rangle,$$
where $w=(ts^{-1}t^{-1}s)^n$, let $S=\eta(s),  T=\eta(t)$ and $W=\eta(w)$. Then the trace of $S$ and the trace of $T$ are both 
$2 \cos \frac{\alpha}{2}$. 
Let $\rho$ be the complex distance between the axes of $S$ and $T$ in the hyperbolic space $\mathbb{H}^3.$  See (\cite{F}, p. 68) for a formal definition of the complex distance (width) between oriented lines in $\mathbb{H}^3.$ The detailed formulae for calculation of $V=\cosh(\rho)$ can be found in the proof of Theorem 4.3.

\begin{lemma}\label{lem:swc}
For $c \in \text{\textnormal{SL}}(2, \C)$ which satisfies $cS=T^{-1}c$ and $c^2=-I$,
$$SWT^{-1}W^{-1}=-(SWc)^2 $$
\end{lemma}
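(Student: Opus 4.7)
The plan is a purely algebraic manipulation using the two relations $cS=T^{-1}c$ and $c^2=-I$. There is no geometry needed at this stage; the lemma is a formal identity in $\mathrm{SL}(2,\mathbb{C})$ once one observes that conjugation by $c$ is an involution swapping $S$ with $T^{-1}$.

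First I would extract from the two given relations the symmetric companion relation $cT=S^{-1}c$. Indeed, $cSc^{-1}=T^{-1}$ together with $c^{-1}=-c$ (from $c^2=-I$) gives $T=cS^{-1}c^{-1}$, and conjugating this identity by $c$ again yields $cTc^{-1}=c^{2}S^{-1}c^{-2}=S^{-1}$. Thus conjugation by $c$ sends $S\mapsto T^{-1}$ and $T\mapsto S^{-1}$.

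Next I would apply this involution to the commutator-like word $u=TS^{-1}T^{-1}S$ whose $n$-th power defines $W$. Conjugating factor by factor gives
\begin{equation*}
cuc^{-1}=(cTc^{-1})(cS^{-1}c^{-1})(cT^{-1}c^{-1})(cSc^{-1})=S^{-1}TST^{-1}=u^{-1},
\end{equation*}
and therefore $cWc^{-1}=cu^{n}c^{-1}=u^{-n}=W^{-1}$, i.e.\ $cW=W^{-1}c$.

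Finally I would just expand $(SWc)^{2}$, pushing each $c$ past the letters it meets using the relations $cS=T^{-1}c$, $cW=W^{-1}c$, and collecting the resulting $c^{2}=-I$ at the end:
\begin{equation*}
(SWc)(SWc)=SW(cS)Wc=SWT^{-1}(cW)c=SWT^{-1}W^{-1}c^{2}=-SWT^{-1}W^{-1}.
\end{equation*}
Multiplying by $-1$ gives the claim. The only nontrivial step is recognising that the two hypotheses on $c$ force $c$ to implement the order-two symmetry $S\leftrightarrow T^{-1}$ that makes $u$ (and hence $W$) invert under conjugation; once that is in place, the rest is a three-line computation.
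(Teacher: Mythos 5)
Your proposal is correct and follows essentially the same route as the paper: the paper's one-line computation $(SWc)^2 = SWT^{-1}c(TS^{-1}T^{-1}S)^n c = SWT^{-1}(S^{-1}TST^{-1})^n c^2 = -SWT^{-1}W^{-1}$ is exactly your expansion, with the intermediate facts $cTc^{-1}=S^{-1}$ and $cWc^{-1}=W^{-1}$ left implicit. You have merely made explicit the observation that conjugation by $c$ inverts the word $u=TS^{-1}T^{-1}S$, which the paper uses without comment.
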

\begin{proof}
 \begin{equation*}
\begin{split}
 (SWc)^2 & =SWcSWc=SWT^{-1}c(TS^{-1}T^{-1}S)^nc \\
              & =SWT^{-1}(S^{-1}TST^{-1})^nc^2=-SWT^{-1}W^{-1}.
 \end{split}
\end{equation*}   
\end{proof}

From the structure of the algebraic set of $R\left(\pi_1(X_{2n})\right)$ with coordinates $\eta(s)$ and $\eta(t)$ we have the defining equation of $R\left(\pi_1(X_{2n})\right)$. By plugging in $e^{\frac{i \alpha}{2}}$ into $M$ of that equation and changing the variables to $B=\cos  \frac{\alpha}{2}$ and $V=\cosh \rho$, we have the following theorem.

\begin{theorem} \label{thm:cpolynomial}
For $B=\cos  \frac{\alpha}{2}$, $V=\cosh \rho$ is a root of the following complex distance polynomial $P_{2n}=P_{2n}(V,B)$ which is given recursively by 

\medskip
\begin{equation*}
P_{2n} = \begin{cases}
 \left(\left(4 B^4-8 B^2+4\right) V^2-4 B^4+8 B^2-2\right) P_{2(n-1)} -P_{2(n-2)} \ 
\text{if $n>1$} \\
 \left(\left(4 B^4-8 B^2+4\right) V^2-4 B^4+8 B^2-2\right) P_{2(n+1)}-P_{2(n+2)}  \
\text{if $n<-1$}
\end{cases}
\end{equation*}
\medskip

\noindent with initial conditions
\begin{equation*}
\begin{split}
P_{-2} (V,B) & =\left(2 B^2-2\right) V+2 B^2-1,\\
P_{0} (V,B) & = 1, \\
P_{2} (V,B) & =\left(4 B^4-8 B^2+4\right) V^2+\left(2-2 B^2\right) V-4 B^4+6 B^2-1. \\
\end{split}
\end{equation*}
\end{theorem}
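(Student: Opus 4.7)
The approach is to convert the defining relation of $\pi_{1}(X_{2n})$ into a single trace equation in $V$ and $B$, then exploit a Cayley--Hamilton recursion in the exponent $n$ to extract $P_{2n}$ as a polynomial quotient. By Lemma~\ref{lem:swc}, the relation $SWT^{-1}W^{-1}=I$ is equivalent to $(SWc)^{2}=-I$. Since any $X\in\text{SL}(2,\C)$ satisfies $X^{2}=\text{tr}(X)X-I$, this collapses to the scalar equation $\text{tr}(SWc)=0$. Setting $U=TS^{-1}T^{-1}S$ so that $W=U^{n}$, the problem reduces to the analysis of the sequence $f(n):=\text{tr}(SU^{n}c)$.

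Applying Cayley--Hamilton to $U$ yields $U^{n}=\text{tr}(U)\,U^{n-1}-U^{n-2}$; multiplying on the left by $Sc$ and taking traces gives the two-sided recursion $f(n)=\tau\,f(n-1)-f(n-2)$ with $\tau:=\text{tr}(U)$, which one runs forward for $n>1$ and backward for $n<-1$. To compute $\tau$, I would apply the Fricke commutator identity
\[
\text{tr}(ABA^{-1}B^{-1})=\text{tr}(A)^{2}+\text{tr}(B)^{2}+\text{tr}(AB)^{2}-\text{tr}(A)\text{tr}(B)\text{tr}(AB)-2
\]
with $A=T$ and $B=S^{-1}$. Using $\text{tr}(S)=\text{tr}(T)=2B$ together with the direct matrix computation $\text{tr}(TS^{-1})=2B^{2}+2(1-B^{2})V$ (read off from the given matrix entries of $S$ and $T$ using $e^{\rho}+e^{-\rho}=2V$), simplification yields $\tau=(4B^{4}-8B^{2}+4)V^{2}-4B^{4}+8B^{2}-2$, which is exactly the coefficient in the stated recursion.

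For the initial data, the constraints $cS=T^{-1}c$ and $c^{2}=-I$ together with the explicit forms of $S$ and $T$ pin $c$ down (up to sign) to the matrix with first row $(0,1)$ and second row $(-1,0)$. A direct calculation then gives $f(0)=-2i\sin(\alpha/2)\sinh(\rho/2)$, which is nonzero in the hyperbolic range. Computing $f(1)=\text{tr}(SUc)$ and $f(-1)=\text{tr}(TST^{-1}c)$ by matrix multiplication, the scalar $f(0)$ factors cleanly out of each; rewriting the remaining quotients in terms of $V=\cosh\rho$ and $B=\cos(\alpha/2)$ via $(i\sin(\alpha/2))^{2}=B^{2}-1$ yields $f(1)/f(0)=P_{2}(V,B)$ and $f(-1)/f(0)=P_{-2}(V,B)$ on the nose.

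Setting $P_{2n}:=f(n)/f(0)$, the recursion for $f$ descends verbatim to the claimed recursion for $P_{2n}$ with the announced initial conditions, and each $P_{2n}$ is visibly a polynomial in $V$ and $B$. Because $f(0)\neq 0$ in the hyperbolic range, $\text{tr}(SWc)=0$ is equivalent to $P_{2n}(V,B)=0$, so $V=\cosh\rho$ is a root. The one genuine obstacle is the base-case calculation: one must recognize that the nonpolynomial scalar $-2i\sin(\alpha/2)\sinh(\rho/2)=f(0)$ divides cleanly out of $f(\pm 1)$, leaving polynomial quotients that coincide exactly with the stated $P_{\pm 2}$; the rest (Cayley--Hamilton, Fricke, descent of the recursion) is automatic.
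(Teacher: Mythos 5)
Your proposal is correct and follows essentially the same route as the paper: reduce the relator to the scalar equation $\text{tr}(SWc)=0$ via Lemma~\ref{lem:swc}, derive the two-sided linear recursion for $\text{tr}(SU^{n}c)$ from the Cayley--Hamilton theorem with coefficient $\text{tr}(U)=\text{tr}(U^{-1})$, and divide out the common factor $2i\sin\frac{\alpha}{2}\sinh\frac{\rho}{2}$ (coming from the reducible representations) to obtain the normalized $P_{2n}$ with the stated base cases. The only differences are cosmetic: you evaluate $\text{tr}(U)$ in closed form via the Fricke commutator identity and check $P_{\pm 2}$ by hand, where the paper delegates these computations to \emph{Mathematica}.
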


\begin{proof}
Note that $SWT^{-1}W^{-1}=I$, which gives the defining equations of 
$R\left(\pi_1(X_{2n})\right)$, is equivalent to $(SWc)^2=-I$ in $\text{SL}(2,\C)$ 
by Lemma~\ref{lem:swc} and  
$(SWc)^2=-I$ in $\text{SL}(2,\C)$ is equivalent to $\text{\textnormal{tr}}(SWc)=0$.

 We may assume
 \begin{center}
$$\begin{array}{cc}
c=\left[\begin{array}{cc}
        0 & -1      \\
        1  & 0
       \end{array}  \right],
\end{array}$$
\end{center}

 \begin{center}
$$\begin{array}{ccccc}
S=\left[\begin{array}{cc}
     \cos \frac{ \alpha}{2} & i e^{\frac{\rho}{2}} \sin \frac{ \alpha}{2}        \\
      i e^{-\frac{\rho}{2}} \sin \frac{ \alpha}{2}  &  \cos \frac{ \alpha}{2}  
                     \end{array} \right],                          
\ \ \
T=\left[\begin{array}{cc}
        \cos \frac{ \alpha}{2} & i e^{-\frac{\rho}{2}} \sin \frac{ \alpha}{2}      \\
         i e^{\frac{\rho}{2}} \sin \frac{ \alpha}{2}   &  \cos \frac{ \alpha}{2} 
                 \end{array}  \right], 
\end{array}$$
\end{center}

and let $U=TS^{-1}T^{-1}S$.

Then the equation,
\begin{align*}
\text{\textnormal{tr}}(SWc) & =\text{\textnormal{tr}}(SU^nc)=\text{\textnormal{tr}}(SU^{n-1}cU^{-1})=\text{\textnormal{tr}}(SU^{n-1}c)\text{\textnormal{tr}}(U^{-1})-\text{\textnormal{tr}}(SU^{n-1}cU) \\
            & =\text{\textnormal{tr}}(SU^{n-1}c)\text{\textnormal{tr}}(U^{-1})-\text{\textnormal{tr}}(SU^{n-2}c)=0 \ \text{if $n>1$}
\end{align*}
or
\begin{equation*}
\begin{split}
\text{\textnormal{tr}}(SWc) & =\text{\textnormal{tr}}(SU^nc)=\text{\textnormal{tr}}(SU^{n+1}cU)=\text{\textnormal{tr}}(SU^{n+1}c)\text{\textnormal{tr}}(U)-\text{\textnormal{tr}}(SU^{n+1}cU^{-1}) \\
            & =\text{\textnormal{tr}}(SU^{n+1}c)\text{\textnormal{tr}}(U)-\text{\textnormal{tr}}(SU^{n+2}c)=0 \ \text{if $n<-1$},
\end{split}
\end{equation*}
gives the complex distance polynomial, where the third equality comes from the Cayley-Hamilton theorem. By direct computations, $\text{\textnormal{tr}}(Sc)$, $\text{\textnormal{tr}}(SUc)$, and $\text{\textnormal{tr}}(SU^{-1}c)$ have $2 i \sinh \frac{\rho}{2} \sin \frac{\alpha}{2}$ as a common factor. Hence, all of $\text{\textnormal{tr}}(SWc)$'s have $2 i \sinh \frac{\rho}{2} \sin \frac{\alpha}{2}$ as a common factor. Actually, the common factor comes from the reducible representations. Just as the A-polynomials, we left the common factor out of our complex distance polynomials. We divide $\text{\textnormal{tr}}(SWc)$ by $2 i \sinh \frac{\rho}{2} \sin \frac{\alpha}{2}$ and denote $\text{\textnormal{tr}}(SWc)/(2 i \sinh \frac{\rho}{2} \sin \frac{\alpha}{2})$ by $P_{2n}$. We used \emph{Mathematica} for the calculations. 
\end{proof}

%%%%%%%%%%%%%%%%%%%%%%%%%%%%%%%%%%%%%%%%%%%%%%%%%%%%%%%%%
\subsection{A-polynomial}

 Let $l=w^{*}w$ and $l_*=ww^{*}$, where 
 $w^{*}$ is the word obtained by reversing $w$.
 Then
  $l$ and $l_*$ are longitudes which are null-homologus in $X_{2n}$.
 We use $l_*$ for this subsection and use both $l$ and $l_*$ in Section~\ref{sec:pytha}  to keep the original form in~\cite{M2} and to keep the familar $l_*$. One can also deal with Section~\ref{sec:pytha} with only $l_*$ or $l$.
 Define $R_U$ to be a subset of $R=\text{Hom} \left(\pi_1 (X_{2n}),\text{SL}(2,\C)\right)$ such that 
$\eta^{\prime}(l_*)$ and $\eta^{\prime}(s)$ are upper triangular. Since every representation can be conjugated into this form, any element of $R$ is equivalent to an element of $R_U$. 
By adding the equation stating that the bottom-left entry of the matrix corresponding to 
$\eta^{\prime}(l_*)$  is zero (the bottom-left entry of the matrix $\eta^{\prime}(s)$ is already zero and the equation that the bottom-left entry of the matrix corresponding to 
$\eta^{\prime}(l_*)$  is equal to zero is redundant in our setting), we have defining equations of $R_U$ and hence $R_U$ is an algebraic subset of $R$. 

Define an \emph{eigenvalue map}
\begin{center} 
 $\xi \equiv (\xi_{l_*}\times\xi_{s}):R_U\longrightarrow\C^2$
\end{center}
given by taking the top-left entries of $\eta^{\prime}(l_*)$, $L$, and of $\eta^{\prime}(s)$, $M$.
The closure of the image $\xi(C)$ of an algebraic component $C$ of $R_U$ is an algebraic subset of $\C^2$. If the closure of the image $\xi(C)$ is a curve, there is a unique defining polynomial of this curve up to constant multiples. The \emph{A-polynomial} of the knot $T_{2n}$ is defined by the product of all defining polynomials of image curves of $R_U$.  The A-polynomial of a knot can be defined up to sign~\cite{CCGLS1}. 

Practically, if we let $r=r(M,t)$ be the upper right entry of $\eta^{\prime}(sw)-\eta^{\prime}(wt)$ and $q=q(M,t)$ be the upper left entry of $\eta^{\prime}(l_*)$,
then the A-polynomial of the knot $T_{2n}$ can be obtained by taking the resultant of $M^{u_1}r$ and $M^{u_2}(q-L)$ over $t$, where the exponents $u_1$ and $u_2$ are chosen so that $M^{u_1}r$ and $M^{u_2}(q-L)$ become polynomials.

In~\cite[Theorem 1]{HS}, Hoste and Shanahan presented the A-polynomial of the twist knots.

%%%%%%%%%%%%%%%%%%%%%%%%%%%%%%%%%%%%%%%%%%%%%%%%%%%%%%%%%%%%%
%%%%%%%%%%%%%%%%%%%%%%%%%%%%%%%%%%%%%%%%%%%%%%%%%%%%%%%%%%%%%
\section{Pythagorean theorem}
\label{sec:pytha}
   Let
 $L_{\eta}=\eta(l)$ and  $L_{*\eta}=\eta(l_*)$.  If we let $l_t=l t$ and $l_s=l_* s$, then $L_T=\eta(l_t)=L_{\eta}T$, $L_S=\eta(l_s)=L_{*\eta}S$ and we have the following lemma. 
 
\begin{lemma} \label{lem:trace}
\begin{equation*}
\begin{split}
\text{\textnormal{tr}}(S^{-1}L_T) &=\text{\textnormal{tr}}(S^{-1}T) \ \text{if $n \geq 1$} \ \text{and} \\
\text{\textnormal{tr}}(T^{-1}L_S) &=\text{\textnormal{tr}}(S^{-1}T) \ \text{if $n \leq -1$}. 
\end{split}
\end{equation*}
\end{lemma}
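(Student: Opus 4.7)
The plan is to treat the claim as a trace identity in $\mathrm{SL}(2,\C)$ that follows from the defining relation $SW = WT$ (encoding $swt^{-1}w^{-1}=1$) combined with a free-group identity coming from the periodic structure $w = u^n$, where $u = ts^{-1}t^{-1}s$. For the case $n \geq 1$ I would first rewrite both traces in commutator form. Using $WT = SW$ one has $L_\eta T = W^*WT = W^*SW$, while $T = W^{-1}SW$ gives $S^{-1}T = S^{-1}W^{-1}SW = [S^{-1}, W^{-1}]$, with the convention $[X,Y] = XYX^{-1}Y^{-1}$. Thus $\mathrm{tr}(S^{-1}T) = \mathrm{tr}([S^{-1}, W^{-1}])$.

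The key algebraic input is the free-group identity $w^* = (t^{-1}s)\,w^{-1}\,(s^{-1}t)$, which holds because direct expansion verifies $u^* = (t^{-1}s)\,u^{-1}\,(s^{-1}t)$ (both sides equal $st^{-1}s^{-1}t$), and the conjugating element passes through the $n$-th power. Substituting $W^* = V W^{-1} V^{-1}$ with $V = T^{-1}S$ into $\mathrm{tr}(S^{-1}W^*SW)$ and setting $A = S^{-1}TS$, one obtains $\mathrm{tr}(S^{-1}L_\eta T) = \mathrm{tr}([A^{-1}, W^{-1}])$. The lemma then reduces to the assertion $\mathrm{tr}([A^{-1}, W^{-1}]) = \mathrm{tr}([S^{-1}, W^{-1}])$.

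I would then apply the Fricke identity
\[
\mathrm{tr}([X,Y]) = \mathrm{tr}(X)^2 + \mathrm{tr}(Y)^2 + \mathrm{tr}(XY)^2 - \mathrm{tr}(X)\,\mathrm{tr}(Y)\,\mathrm{tr}(XY) - 2
\]
to both sides. Using $\mathrm{tr}(A) = \mathrm{tr}(T) = \mathrm{tr}(S)$ (since $T = W^{-1}SW$ makes $S,T,A$ mutually conjugate) and $\mathrm{tr}(W^{-1}) = \mathrm{tr}(W)$, together with the quadratic symmetry of the Fricke polynomial in the mixed trace and the Jordan-type identity $\mathrm{tr}(XY) + \mathrm{tr}(XY^{-1}) = \mathrm{tr}(X)\,\mathrm{tr}(Y)$, the problem reduces to verifying the free-group identity $\mathrm{tr}(A^{-1}W^{-1}) = \mathrm{tr}(S^{-1}W)$. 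I would establish this from the elementary reduction $A^{-1}U^{-1} = T^{-1}$ (which yields $A^{-1}W^{-1} = T^{-1}U^{-(n-1)}$) together with the Cayley--Hamilton relation $U^n = s_{n-1}(\mathrm{tr}(U))\,U - s_{n-2}(\mathrm{tr}(U))\,I$, where $s_k$ is defined by $s_{k+1}(x) = x s_k(x) - s_{k-1}(x)$, $s_0 = 1$, $s_{-1} = 0$. The short computations $\mathrm{tr}(US^{-1}) = \mathrm{tr}(S)$ and $\mathrm{tr}(TU) = \mathrm{tr}(S)(\mathrm{tr}(U) - 1)$ then show that both sides collapse to $\mathrm{tr}(S)\,(s_{n-1}(\mathrm{tr}(U)) - s_{n-2}(\mathrm{tr}(U)))$. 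The case $n \leq -1$ is handled by the entirely symmetric argument obtained by swapping $s$ and $t$ (which interchanges $l$ and $l_*$). The main obstacle is the Chebyshev bookkeeping at the end, which hinges on the recursion identity $x s_{n-2}(x) = s_{n-1}(x) + s_{n-3}(x)$ to match both sides to the common polynomial expression.
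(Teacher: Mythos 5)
Your argument is correct --- the key steps all check out ($u^{*}=(t^{-1}s)u^{-1}(s^{-1}t)$, the reduction to $\mathrm{tr}([A^{-1},W^{-1}])=\mathrm{tr}([S^{-1},W^{-1}])$, the identity $A^{-1}U^{-1}=T^{-1}$, and the Chebyshev collapse of both sides to $\mathrm{tr}(S)\left(s_{n-1}(\mathrm{tr}\,U)-s_{n-2}(\mathrm{tr}\,U)\right)$) --- but it is a genuinely different and much heavier route than the paper's. The paper proves the lemma as a pure conjugation identity in the free group on $s,t$: expanding $S^{-1}L_T=S^{-1}W^{*}WT$ and regrouping gives
\begin{equation*}
S^{-1}L_T=g^{-1}\left(TS^{-1}\right)g,\qquad g=(T^{-1}STS^{-1})^{n-1}T^{-1}ST,
\end{equation*}
so the trace identity is immediate; it holds for arbitrary $S,T\in\mathrm{SL}(2,\C)$, needs neither the group relation nor any $\mathrm{SL}_2$ trace calculus, and works verbatim for $T^{-1}L_S$ when $n\le -1$. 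Your proof instead invokes the relation $SW=WT$ at the outset (to write $W^{*}WT=W^{*}SW$ and $S^{-1}T=[S^{-1},W^{-1}]$), then the Fricke commutator identity, then Cayley--Hamilton; this establishes the statement only for representations of $\pi_1(X_{2n})$, which suffices for the paper's application but hides the fact that the identity is relation-free. Two points to tighten if you keep your route: (i) the ``quadratic symmetry'' step is cleanest stated as $\mathrm{tr}([X,Y])=\mathrm{tr}([X,Y^{-1}])$, which is exactly what licenses comparing $\mathrm{tr}(A^{-1}W^{-1})$ with $\mathrm{tr}(S^{-1}W)$ instead of $\mathrm{tr}(S^{-1}W^{-1})$; (ii) the claim that $n\le -1$ follows by the symmetry $s\leftrightarrow t$ implicitly uses that $w^{*}$ also conjugates $s$ to $t$ (equivalently, that $l_{*}=ww^{*}$ commutes with $s$), an additional fact about the two-bridge relator that your $n\ge 1$ argument never needed and that should be justified; the paper sidesteps this by running the identical regrouping directly on $T^{-1}L_S$.
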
 

\begin{proof}
 Since
\begin{equation*}
\begin{split}
S^{-1}L_T &=T^{-1}S^{-1}T(ST^{-1}S^{-1}T)^{n-1} \cdot (TS^{-1})(T^{-1}STS^{-1})^{n-1}T^{-1}ST\\
           &=\left((T^{-1}STS^{-1})^{n-1}T^{-1}ST\right)^{-1}(TS^{-1})(T^{-1}STS^{-1})^{n-1}T^{-1}ST \ \text{if $n \geq 1$} \\
and\ \ \ \ \ \ &\\
T^{-1}L_S &=S^{-1}T^{-1}S(TS^{-1}T^{-1}S)^{n-1} \cdot (ST^{-1})(S^{-1}TST^{-1})^{n-1}S^{-1}TS\\
           &=\left((S^{-1}TST^{-1})^{n-1}S^{-1}TS\right)^{-1}(ST^{-1})(S^{-1}TST^{-1})^{n-1}S^{-1}TS  \ \text{if $n \leq -1$}, 
\end{split}
\end{equation*}
 we have
\begin{equation*}
\begin{split}
\text{\textnormal{tr}}(S^{-1}L_T) &=\text{\textnormal{tr}}(TS^{-1})=\text{\textnormal{tr}}(S^{-1}T) \ \text{if $n \geq 1$} \ \text{and} \\
\text{\textnormal{tr}}(T^{-1}L_S) &=\text{\textnormal{tr}}(ST^{-1})=\text{\textnormal{tr}}(TS^{-1}) =\text{\textnormal{tr}}(S^{-1}T) \ \text{if $n \leq -1$}. 
\end{split}
\end{equation*}
\end{proof}

\begin{definition} \label{def:longitude}
 The \emph{complex length} of the longitude $l$ or $l_*$ is the complex number 
 $\gamma_{\alpha}$ modulo $2 \pi \Z$ satisfying 
\begin{align*}
 \text{\textnormal{tr}}(\eta(l))=\text{\textnormal{tr}}(\eta(l_{*}))=2 \cosh \frac{\gamma_{\alpha}}{2}.
\end{align*}
 Note that 
 $l_{\alpha}=|Re(\gamma_{\alpha})|$ is the real length of the longitude of the cone-manifold $T_{2n}(\alpha)$.
\end{definition}

We prepare and prove Theorem~\ref{thm:pytha} for $T_{2n}(\alpha)$ 
with $n \geq 1$. For $n <-1$, the same Pythagrean theorem is obtained by replacing $T$ and $L_T$ with $S$ and $L_S$. We will use the oriented line matrix corresponding to a given matrix. One can refer to~\cite[Section V]{F} for oriented line matrices. Denote by $l(N)$ the line matrix corresponding to a matrix, $N$, in $\text{SL}(2,\C)$. Then $l(N)=(N-N^{-1})/\sqrt{det(N-N^{-1})}$.

By sending common fixed points of $T$ and $L_{\eta}$ to $0$ and $\infty$, we have

\begin{center}
$$\begin{array}{ccccc}
T=\left[\begin{array}{cc}
                       e^{\frac{i \alpha}{2}} & 0         \\
                        0      &  e^{-\frac{i \alpha}{2}} 
                     \end{array} \right],                          
\ \ \
L_{\eta}=\left[\begin{array}{cc}
                    e^{\frac{\gamma_{\alpha}}{2}}& 0         \\
                   0      &  e^{-\frac{\gamma_{\alpha}}{2}}
                 \end{array}  \right],
\end{array}$$
\end{center}

\begin{center}
$L_T=L_{\eta}T =\left[\begin{array}{cc}
                  e^{\frac{\gamma_{\alpha}+i \alpha}{2}}& 0                \\
                 0                & e^{-\frac{\gamma_{\alpha}+i \alpha}{2}}  
                 \end{array} \right],$ 
 \end{center}                
and the following line matrices 

\begin{align*}
l(T) &=\frac{T-T^{-1}}{2i \sinh \frac{i \alpha}{2}}  \\
        &=\frac{1}{i (e^{\frac{i \alpha}{2}}-e^{-\frac{i \alpha}{2}})}
       \left[\begin{array}{cc}
                  e^{\frac{i \alpha}{2}}-e^{-\frac{i \alpha}{2}}& 0                \\
                 0                & e^{-\frac{i \alpha}{2}}-e^{\frac{i \alpha}{2}}  
                 \end{array} \right]\\
        &=\left[\begin{array}{cc}
                  -i & 0                \\
                 0                & i
                 \end{array} \right],
 \end{align*} 

\begin{align*}
l(L_T) &=\frac{L_T-L_T^{-1}}{2i \sinh \frac{\gamma_{\alpha}+i \alpha}{2}}  \\
        &=\frac{1}{i (e^{\frac{\gamma_{\alpha}+i \alpha}{2}}-e^{-\frac{\gamma_{\alpha}+i \alpha}{2}})}
       \left[\begin{array}{cc}
                  e^{\frac{\gamma_{\alpha}+i \alpha}{2}}-e^{-\frac{\gamma_{\alpha}+i \alpha}{2}}& 0                \\
                 0  & e^{-\frac{\gamma_{\alpha}+i \alpha}{2}}-e^{\frac{\gamma_{\alpha}+i \alpha}{2}}  
                 \end{array} \right]\\
        &=\left[\begin{array}{cc}
                 -i & 0                \\
                 0                & i
                 \end{array} \right],
 \end{align*} 
which give the orientations of axes of $T$ and $L_T$.                 

\begin{figure}
\begin{center}
\includegraphics{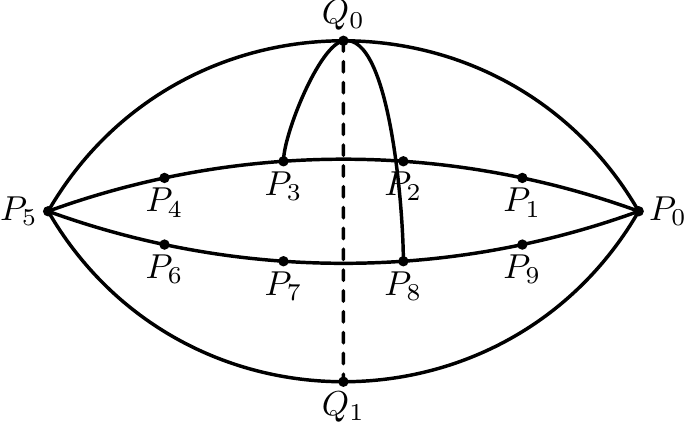}
\caption{Fundamental polyhedron for $4_1(\pi)$}\label{fig:polygon}
\end{center}
\end{figure}

Figure~\ref{fig:polygon} is the fundamental polyhedron for $T_2(\pi)$. The double branched covering space of the polyhedron along $\overline{P_3Q_0P_8}$ and $\overline{P_0Q_1P_5}$ is the lens space $L(5,3)$. The fundamental polyhedron for the hyperbolic cone-manifold of $T_2(\alpha)$ can be obtained from the fundamental polyhedron for $T_2(\pi)$ by deforming the cone-angle  continuously. Recall that a Lambert quadrangle is a quadrangle with three right angles and one acute angle, not necessarily lying on a plane. You can consult~\cite[p. 83 of Section VI]{F} or~\cite{M2} for the trigonometry of a Lambert quadrangle or a right angled hexagon. Let $m$ be the midpoint of $\overline{P_7P_8}$. Then the quadrangle $Q_0Q_1mP_8$ is a Lambert quadrangle with acute angle $\angle Q_0Q_1m=\alpha/2$, which can be considered as a right angled hexagon which is a generalized right angled triangle. The six sides are $\left(\rho,\ (\pi-\frac{\alpha}{2}) i,\ *_1,\ \frac{\pi}{2}i,\ *_2,\ \frac{\gamma_{\alpha}}{4}+ \frac{i \alpha}{2} \right)$. By applying the Law of Cosines to the hexagon, we get the formula in the following theorem geometrically and the same argument works for all twist knots. Hence, we call the following theorem Pythagorean Theorem.

Now, we are ready to prove the following theorem which gives Theorem~\ref{thm:mpytha}.
Recall that  $\gamma_{\alpha}$ modulo $2 \pi \Z$ is the \emph{complex length} of the longitude $l$ or $l_*$ of  $T_{2n}(\alpha)$.
 
\begin{theorem}(Pythagorean Theorem)\label{thm:pytha}
Let $T_{2n}(\alpha)$ be a hyperbolic cone-manifold and let $\rho$ be the complex distance between the oriented axes $S$ and $T$. 
Then we have
$$i \cosh \rho =\cot \frac{\alpha}{2} \tanh (\frac{\gamma_{\alpha}}{4}+ \frac{i \alpha}{2}).$$
\end{theorem}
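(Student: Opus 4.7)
My plan is to convert the geometric claim into an algebraic trace identity, then simplify. Since $L_\eta$ commutes with $T$ (same axis), the basis fixed just before the theorem makes both $T=\mathrm{diag}(e^{i\alpha/2},e^{-i\alpha/2})$ and $L_T=\mathrm{diag}(e^{\beta},e^{-\beta})$ diagonal, where $\beta=(\gamma_\alpha+i\alpha)/2$. If $a,d$ denote the diagonal entries of $S$ in this basis, the equality $\mathrm{tr}(S^{-1}L_T)=\mathrm{tr}(S^{-1}T)$ furnished by Lemma~\ref{lem:trace} becomes
\[
de^{\beta}+ae^{-\beta}=de^{i\alpha/2}+ae^{-i\alpha/2}.
\]

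Next I would determine $a,d$ in terms of $B=\cos(\alpha/2)$ and $V=\cosh\rho$. From $\mathrm{tr}(S)=2B$ I get $a+d=2B$. A direct computation using the explicit matrices for $S,T$ in Section~\ref{sec:poly} gives $\mathrm{tr}(ST)=2B^2-2V(1-B^2)$, and this trace is basis-independent. Since in the diagonal basis $\mathrm{tr}(ST)=ae^{i\alpha/2}+de^{-i\alpha/2}$, I can solve for $a-d=2iV\sin(\alpha/2)$, so $a=B+iV\sin(\alpha/2)$ and $d=B-iV\sin(\alpha/2)$. Substituting these into the boxed identity and simplifying each side separately produces
\[
2B\cosh\beta-2iV\sin(\alpha/2)\sinh\beta=2B^2+2V\sin^2(\alpha/2),
\]
which rearranges to $iV=\dfrac{B}{\sin(\alpha/2)}\cdot\dfrac{i(\cosh\beta-B)}{\sin(\alpha/2)+i\sinh\beta}$.

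It then remains to recognize the rightmost factor as $\tanh(\gamma_\alpha/4+i\alpha/2)$. Setting $\theta=\gamma_\alpha/4+i\alpha/2$, so that $2\theta=\beta+i\alpha/2$, the half-angle identity $\tanh\theta=(\cosh 2\theta-1)/\sinh 2\theta$ combined with the addition formulas for $\cosh(\beta+i\alpha/2)$ and $\sinh(\beta+i\alpha/2)$ reduces the claim to a polynomial identity in $\cosh\beta,\sinh\beta,\sin(\alpha/2),\cos(\alpha/2)$ that collapses after using $\cosh^2\beta-\sinh^2\beta=1$. The main obstacle is bookkeeping: tracking the sign convention for $\rho$ (so that the $\mathrm{tr}(ST)$ formula has the correct sign), and spotting that the doubling which recovers the argument in the theorem is $2\theta=\beta+i\alpha/2$, not $2\theta=\beta$, so that $\tanh$ acquires the precise argument $\gamma_\alpha/4+i\alpha/2$ rather than $\gamma_\alpha/4+i\alpha/4$.
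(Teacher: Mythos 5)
Your proposal is correct; I checked the computations and they all close up. In particular, with $\nu=\alpha/2$ and $\beta=\Lambda=(\gamma_\alpha+i\alpha)/2$ your determination $a=B+iV\sin\nu$, $d=B-iV\sin\nu$ follows from $\operatorname{tr}(S)=2B$ and $\operatorname{tr}(ST)=2\cos^2\nu-2V\sin^2\nu$, your boxed identity rearranges to $V\sin\nu(\sin\nu+i\sinh\beta)=\cos\nu(\cosh\beta-\cos\nu)$, and the final half-angle verification does collapse: cross-multiplying
\[
\frac{\cosh\beta\cos\nu+i\sinh\beta\sin\nu-1}{\sinh\beta\cos\nu+i\cosh\beta\sin\nu}
\quad\text{against}\quad
\frac{i\left(\cosh\beta-\cos\nu\right)}{\sin\nu+i\sinh\beta}
\]
reduces both sides to $(\cosh\beta-\cos\nu)\left(i\sinh\beta\cos\nu-\cosh\beta\sin\nu\right)$ after using $\cosh^2\beta-\sinh^2\beta=1$. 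Where you genuinely diverge from the paper is the middle of the argument. The paper starts from Fenchel's line-matrix formula $\cosh\rho=-\tfrac12\operatorname{tr}\bigl(l(S)\,l(T)\bigr)$, substitutes $l(L_T)$ for $l(T)$ after checking that the two line matrices coincide, expands $(S-S^{-1})(L_T-L_T^{-1})$, and uses the Cayley--Hamilton identity $\operatorname{tr}(SL_T)=\operatorname{tr}(S)\operatorname{tr}(L_T)-\operatorname{tr}(S^{-1}L_T)$ together with Lemma~\ref{lem:trace} to reach the same key equation. You instead bypass the line-matrix formalism entirely: diagonalizing $T$ and $L_T$ and pinning down the diagonal entries of $S$ by two traces lets you feed Lemma~\ref{lem:trace} in directly. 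Your route is more elementary (no appeal to the $-\tfrac12\operatorname{tr}(l(S)l(T))$ formula or to orientation checks on line matrices, beyond the sign bookkeeping you already flag), at the cost of committing to a specific basis; the paper's route keeps everything basis-free until the final explicit trace computation, which it needs anyway. The endgame differs only cosmetically (sum-to-product versus the half-angle identity $\tanh\theta=(\cosh2\theta-1)/\sinh2\theta$). The only omission, shared with the paper's own write-up, is that the argument as given treats $n\geq1$; for $n\leq-1$ one runs the identical computation with $S$, $L_S$ and the second identity of Lemma~\ref{lem:trace}.
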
 

\begin{proof}
Suppose $n \geq 1$. 
\begin{align*}
\cosh \rho &=-\frac{\text{\textnormal{tr}}(l(S) l(T))}{2}\\
               &=-\frac{\text{\textnormal{tr}}(l(S) l(L_T))}{2} \\
               &=\frac{\text{\textnormal{tr}}((S-S^{-1}) (L_T-L_T^{-1}))}
               {8 \sinh \frac{i \alpha}{2} \sinh \frac{ \gamma_{\alpha}+i \alpha}{2}}\\
               &=\frac{\text{\textnormal{tr}}(SL_T-S^{-1}L_T-SL_T^{-1}+(L_TS)^{-1}))}
               {8 \sinh \frac{i \alpha}{2} \sinh \frac{ \gamma_{\alpha}+i \alpha}{2}}\\ 
               &=\frac{2 (\text{\textnormal{tr}}(SL_T)-\text{\textnormal{tr}}(S^{-1}L_T))}
               {8 \sinh \frac{i \alpha}{2} \sinh \frac{ \gamma_{\alpha}+i \alpha}{2}}\\  
               &=\frac{\text{\textnormal{tr}}(S) \text{\textnormal{tr}}(L_T) -2 \text{\textnormal{tr}}(S^{-1}L_T)}
               {4 \sinh \frac{i \alpha}{2} \sinh \frac{ \gamma_{\alpha}+i \alpha}{2}}\\ 
               &=\frac{\text{\textnormal{tr}}(S) \text{\textnormal{tr}}(L_T) -2 \text{\textnormal{tr}}(S^{-1}T)}
               {4 \sinh \frac{i \alpha}{2} \sinh \frac{ \gamma_{\alpha}+i \alpha}{2}}
               \end{align*}
where the first equality comes from ~\cite[p. 68]{F}, the sixth equality comes from the Cayley-Hamilton theorem, and the seventh equality comes from Lemma~\ref{lem:trace}.

Let $\nu = \frac{\alpha}{2}$, $\Lambda = \frac{ \gamma_{\alpha} + i \alpha}{2}$, and
$V = \cosh \rho$.
Then $\text{\textnormal{tr}}(S) = 2 \cos \nu$, $\text{\textnormal{tr}}(L_T) =  2 \cosh \Lambda$, and

\begin{align*}
\text{\textnormal{tr}}(S^{-1}T) &= tr 
    \left( \begin{pmatrix}
     \cos  \nu & -i e^{\frac{\rho}{2}} \sin \nu        \\
     - i e^{-\frac{\rho}{2}} \sin  \nu  &  \cos \nu 
                     \end{pmatrix}                          
     \begin{pmatrix}
        \cos \nu & i e^{-\frac{\rho}{2}} \sin  \nu      \\
         i e^{\frac{\rho}{2}} \sin  \nu   &  \cos  \nu
                 \end{pmatrix}  \right) \\
          &=\cos^2 \nu + e^{\rho} \sin^2 \nu + e^{- \rho} \sin^2 \nu + \cos^2 \nu\\
          &=2 ( \cos^2 \nu + V \sin^2 \nu).
\end{align*}

Hence,
\begin{equation*}
V = \frac{4 \cos \nu \cosh \Lambda -4 ( \cos^2 \nu + V \sin^2 \nu)}
               {4 i \sin \nu \sinh \Lambda}
\end{equation*}

which is equivalent to

$$V \sin \nu ( \sin \nu + i \sinh \Lambda) 
=\cos \nu (\cosh \Lambda - \cos \nu).$$

By solving for $V$, we have 

\begin{equation*}
\begin{split}
 V &=  \cot \nu \frac{\cosh \Lambda - \cos \nu}{\sin \nu + i \sinh \Lambda} \\
    &=  \cot \nu \frac{\cosh \Lambda - \cosh i \nu}
    { i \sinh \Lambda- i \sinh i \nu} \\
    &= - i \cot \nu \frac{\cosh \Lambda - \cosh i \nu}
    {\sinh \Lambda-\sinh i \nu} \\
    &= - i \cot \nu \frac{2 \sinh (\frac{\Lambda+i \nu}{2})
     \sinh (\frac{\Lambda-i \nu}{2})}
    {2 \cosh (\frac{\Lambda+i \nu}{2})
     \sinh (\frac{\Lambda-i \nu}{2})} \\
    &=- i \cot \nu \tanh \frac{\Lambda+i \nu}{2}.
 \end{split}
\end{equation*}   

By putting back $ \Lambda= \frac{ \gamma_{\alpha} + i \alpha}{2} 
= \frac{ \gamma_{\alpha}}{2} + i \nu$,
we have

$$V =- i \cot \nu \tanh ( \frac{\gamma_{\alpha}}{4} + i \nu)$$
which is equivalent to

$$i \cosh \rho =\cot \frac{\alpha}{2} \tanh (\frac{\gamma_{\alpha}}{4}+ \frac{i \alpha}{2}).$$
\end{proof}

Pythagorean theorem~\ref{thm:pytha} gives the following theorem which relates the zeros of $A_{2n}(L,M)$ and the zeros of $P_{2n}(V,B)$ for $M=e^{ \frac{i \alpha}{2}}$, 
$B=\cos \frac{\alpha}{2}$ and $A=\cot \frac{\alpha}{2}$.

\begin{theorem}\label{thm:mpytha}
Let $A=\cot \frac{\alpha}{2}$ and $M=e^{ \frac{i \alpha}{2}}$. Then the following formulae show that there is a one to one correspondence between the zeros of 
$A_{2n}(L,M)$ and the zeros of $P_{2n}(V,B)$:

$$iV=A \frac{LM^2-1}{LM^2+1} \ and \ L=M^{-2} \frac{A+iV}{A-iV}.$$
\end{theorem}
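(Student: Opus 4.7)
The plan is to derive both formulae as direct algebraic consequences of the Pythagorean Theorem~\ref{thm:pytha}, and then argue that the substitution they encode converts zeros of $P_{2n}(V,B)$ into zeros of $A_{2n}(L,M)$ and vice versa.

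First I would rewrite the Pythagorean identity $i\cosh\rho = \cot\tfrac{\alpha}{2}\tanh\!\bigl(\tfrac{\gamma_\alpha}{4}+\tfrac{i\alpha}{2}\bigr)$ purely in terms of the eigenvalue coordinates used to define the A-polynomial. From the diagonalized forms of $T$ and $L_\eta$ used in Section~\ref{sec:pytha} (and the definition of complex length of the longitude), the top-left eigenvalues are $M=e^{i\alpha/2}$ and $L=e^{\gamma_\alpha/2}$, so that
$$LM^{2} = e^{\gamma_\alpha/2+i\alpha},\qquad (LM^{2})^{1/2}=e^{\gamma_\alpha/4+i\alpha/2}.$$
Writing $\tanh$ in exponential form and multiplying numerator and denominator by $(LM^{2})^{1/2}$ gives
$$\tanh\!\Bigl(\tfrac{\gamma_\alpha}{4}+\tfrac{i\alpha}{2}\Bigr)
= \frac{(LM^{2})^{1/2}-(LM^{2})^{-1/2}}{(LM^{2})^{1/2}+(LM^{2})^{-1/2}}
= \frac{LM^{2}-1}{LM^{2}+1}.$$
Substituting $V=\cosh\rho$ and $A=\cot\tfrac{\alpha}{2}$, the Pythagorean identity becomes the first formula $iV = A\,\dfrac{LM^{2}-1}{LM^{2}+1}$.

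Next I would solve this for $L$. Cross-multiplying $iV(LM^{2}+1)=A(LM^{2}-1)$ and collecting the $LM^{2}$ terms yields $LM^{2}(A-iV)=A+iV$, whence $L=M^{-2}\dfrac{A+iV}{A-iV}$. This is a rational bijection between the variables $L$ and $V$ (for fixed $M$, equivalently fixed $A,B$), with inverse given by the first formula.

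Finally, I would justify the bijection between zeros. By construction, both polynomials arise as defining equations for the same algebraic set, namely the (excellent component of the) character variety with meridian eigenvalue $M$: $P_{2n}(V,B)=0$ is the vanishing of $\mathrm{tr}(SWc)$ (after removing the reducible factor) expressed in the coordinate $V=\cosh\rho$, while $A_{2n}(L,M)=0$ records the longitude eigenvalue $L$ on the same component. A representation in this common set determines the pair $(L,M)$ and also the pair $(V,B)$, and Theorem~\ref{thm:pytha} shows these two pairs are related by the rational change of variable above. The only delicate point is the branch choice for $(LM^{2})^{1/2}$; here the ambiguity is harmless because replacing $(LM^{2})^{1/2}$ by its negative changes the sign of both numerator and denominator of $\tanh$, leaving the formula invariant. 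Hence the rational map $V\mapsto L$ and its inverse give a one-to-one correspondence between the zero sets, as claimed. The main (and essentially only) obstacle is bookkeeping the correspondence between the two coordinate systems and verifying that this branch ambiguity does not disturb the bijection.
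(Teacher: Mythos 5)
Your proposal is correct and follows essentially the same route as the paper: both derive the first formula by expanding $\tanh\left(\frac{\gamma_\alpha}{4}+\frac{i\alpha}{2}\right)$ in exponentials and identifying $e^{\gamma_\alpha/2+i\alpha}$ with $LM^2$, then solve algebraically for $L$. Your added remarks on the branch of $(LM^2)^{1/2}$ and on why the rational change of variable identifies the zero sets are harmless elaborations of what the paper leaves implicit.
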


\begin{proof}
With the same notation as in the proof of Theorem~\ref{thm:pytha},
\begin{equation*}
\begin{split}
i V &= i \cosh \rho \\
     &= \cot \nu \tanh ( \frac{\gamma_{\alpha}}{4} + i \nu) \\
     &= \cot \nu \frac{\sinh ( \frac{\gamma_{\alpha}}{4} + i \nu)}
     {\cosh ( \frac{\gamma_{\alpha}}{4} + i \nu)} \\
     &= \cot \nu 
     \frac{e^{\frac{\gamma_{\alpha}}{4} + i \nu}-e^{-(\frac{\gamma_{\alpha}}{4} + i \nu)}}        {e^{\frac{\gamma_{\alpha}}{4} + i \nu}+ e^{-(\frac{\gamma_{\alpha}}{4} + i \nu)}} \\
     &= \cot \frac{\alpha}{2} 
     \frac{e^{\frac{\gamma_{\alpha}}{2} + i \alpha}-1}{e^{\frac{\gamma_{\alpha}}{2} + i \alpha}+ 1} \\
     &=A \frac{LM^2 -1}{LM^2 + 1}.
\end{split}
\end{equation*}
If we solve the above equation, $$iV=A \frac{LM^2-1}{LM^2+1},$$ for $L$, we have  
$$L=M^{-2} \frac{A+iV}{A-iV}.$$ 
\end{proof}

%%%%%%%%%%%%%%%%%%%%%%%%%%%%%%%%%%%%%%%%%%%%%%%%%%%%%%%%
%%%%%%%%%%%%%%%%%%%%%%%%%%%%%%%%%%%%%%%%%%%%%%%%%%%%%%%%

\section{Proof of Theorem~\ref{thm:main}} \label{sec:proof}

 We mention here that the proof can be done without referring to A-polynomial. We identified $L$ with a root of $A_{2n}(M,L)$ because A-polynomial is rather well-known.

\medskip

For $n \geq 1$ and $M=e^{i \frac{\alpha}{2}}$ ($B=\cos  \frac{\alpha}{2}$), $A_{2n}(M,L)$ and $P_{2n}(V,A)$ have $2n$ component zeros, and for  $n < -1$, $-(2n+1)$ component zeros. For each $n$, there exists an angle $\alpha_0 \in [\frac{2\pi}{3},\pi)$ such that $T_{2n}(\alpha)$ is hyperbolic for $\alpha \in (0, \alpha_0)$, Euclidean for $\alpha=\alpha_0$, and spherical for $\alpha \in (\alpha_0, \pi]$ \cite{P2,HLM1,K1,PW}. 
From the following Equality~\ref{equ:absL}, when $|L|=1$, which is equivalent to $\alpha=\alpha_0$, $\text{\textnormal{Im}}(V)=0$. Hence, when $\alpha$ increases from $0$ to $\alpha_0$, two complex numbers $V$ and $\overline{V}$ approach to a same real number. In other words, $P_{2n}(V,\cos  \frac{\alpha_0}{2})$ has a muliple root and hence $A_{2n}(L, e^{ \frac{i \alpha_0}{2}})$ has a multiple root by Theorem~\ref{thm:mpytha}.
Denote by $D(T_{2n}(\alpha))$ be the greatest common factor of the discriminant of $A_{2n}(L, e^{ \frac{i \alpha}{2}})$ over $L$  and the discriminant of
$P_{2n}(V,\cos  \frac{\alpha}{2})$ over $V$. Then $\alpha_0$ will be one of the zeros of $D(T_{2n}(\alpha))$.

From Theorem~\ref{thm:mpytha}, we have following equality,
\begin{equation}\label{equ:absL}
\begin{split}
|L|^2 &= \left|\frac{A+iV}{A-iV}\right|^2 = \frac{|A|^2+|V|^2-2A\,{\rm Im}(V)}{|A|^2+|V|^2+2A\,{\rm Im}(V)}.
\end{split}
\end{equation}

For the volume, we can either choose $|L|\geq 1$ or $|L| \leq 1$. We choose $L$ with $|L|\geq1$ and hence we have $\text{\textnormal{Im}}(V) \leq 0$ by Equality~\ref{equ:absL}. 
 Using the Schl\"{a}fli formula, we calculate the volume of 
$\eta(T_{2n})=\eta(T_{2n}(0))$ for each component with $|L|\geq1$ and having one of the zeros $\alpha_0$ of $D(T_{2n}(\alpha))$ with $\alpha_0 \in [\frac{2\pi}{3},\pi)$ on it. The component which gives the maximal volume is the excellent component~\cite{D1,FK1}. On the geometric component we have
 the volume of a hyperbolic cone-manifold 
$T_{2n}(\alpha)$ for $0 \leq \alpha < \alpha_0$:
\begin{align*}
\text{\textrm{Vol}}(T_{2n}(\alpha)) &=-\int_{\alpha_0}^{\alpha} \frac{l_{\alpha}}{2} \: d\alpha \\
                        &=-\int_{\alpha_0}^{\alpha} \log|L| \: d\alpha\\
                         &=-\int_{\pi}^{\alpha} \log|L| \: d\alpha\\
                         &=\int^{\pi}_{\alpha} \log|L| \: d\alpha\\
                         &=\int^{\pi}_{\alpha}  \log \left|\frac{A+iV}{A-iV} \right|\: d\alpha,                       
\end{align*}
where the first equality comes from the Schl\"{a}fli formula for cone-manifolds (Theorem 3.20 of~\cite{CHK}), the second equality comes from the fact that $l_{\alpha}=|Re(\gamma_{\alpha})|$ is the real length of the longitude of 
$T_{2n}(\alpha)$, the third equality comes from the fact that $\log|L|=0$  for $\alpha_0 < \alpha \leq \pi$ by Equality~\ref{equ:absL} since all the characters are real (the proof of Proposition 6.4 of~\cite{PW}) for $\alpha_0 < \alpha \leq \pi$, and 
$\alpha_0 \in [\frac{2 \pi}{3},\pi)$ is a zero of the discriminant $D(T_{2n}(\alpha))$.

We note that the fundamental set of the two-bridge link orbifolds are constructed in~\cite{MR2}. We also note that the explicit formulae for the Chern-Simons invariant of the twist knot orbifolds are presented in~\cite{HKL} and the A-polynomials of twist knots are obtained from the complex distance polynomials in~\cite{HKL}. 

%%%%%%%%%%%%%%%%%%%%%%%%%%%%%%%%%%%%%%%%%%%%%%%%%%%%%%%%%
%%%%%%%%%%%%%%%%%%%%%%%%%%%%%%%%%%%%%%%%%%%%%%%%%%%%%%%%%
\section{Volumes of the hyperbolic twist knot cone-manifolds and of its cyclic coverings}
Table~\ref{tab1} gives the approximate volume of 
$\eta(T_{2n})$ for each n between $-9$ and $9$ except the unknot and the torus knot and for each component with $\text{\textnormal{Im}}(V) \leq 0$ and having one of the zeros of 
$D(T_{2n}(\alpha))$ with $\alpha_0 \in [\frac{2\pi}{3},\pi)$ on it. We used Simpson's rule for the approximation with $10^4$ intervals from $0$ to $\alpha_0$. In that way our approximate volume on the geometric component is the same as that of SnapPea up to four decimal points. The geometric volume is written one more time on the rightmost column.

Table~\ref{table2-1} (resp. Table~\ref{table2-2}) gives the approximate volume of the hyperbolic twist knot cone-manifold, $V \left(T_{2n} (\frac{2 \pi}{k})\right)$ for $n$ between $1$ and $9$ (resp. for $n$ between $-9$ and $-2$) and for $k$ between $3$ and $10$, and of its cyclic covering, $V \left(M_k (T_{2n})\right)$. We again used Simpson's rule for the approximation with $10^4$ intervals from $\frac{2 \pi}{k}$ to $\alpha_0$. We used \emph{Mathematica} for the calculations.

\begin{table}
\tbl{ The volume of 
$\eta(T_{2n})$ for each component with $\text{\textnormal{Im}}(V) \leq 0$ and having one of the zeros of $D(T_{2n}(\alpha))$ with $\alpha_0 \in [\frac{2\pi}{3},\pi)$ on it. $N$ refers to the number of zeros of $D(T_{2n}(\alpha))$ in $[\frac{2\pi}{3},\pi)$ and $Z$ refers to the zeros of $D(T_{2n}(\alpha))$ in $[\frac{2\pi}{3},\pi)$. The volume of $T_{2n}$, $V\left(T_{2n}\right)$, is written one more time on the rigthtmost column.}
{\begin{tabular}{cc} 
\begin{tabular}{|c|c|c|c|c|}
\hline
2n & N & Z & $V\left(\eta(T_{2n})\right)$ & $V(T_{2n})$ \\
\hline
 2 & 1 & 2.0944 & 2.02988 & 2.02988 \\
\hline
 4 & 1 & 2.57414 & 3.16396 & 3.16396 \\
\hline
 6 & 1 & 2.75069 & 3.4272 & 3.4272 \\
\hline
 8 & 2 & 2.84321 & 3.52619 & 3.52619 \\

  &  & 2.3287 & 3.09308 &  \\
\hline
 10 & 2 & 2.90026 & 3.57388 & 3.57388 \\

  &  & 2.48721 & 3.29551 &  \\
\hline
 12 & 3 & 2.93897 & 3.60046 & 3.60046 \\

  &  & 2.59356 & 3.40614 &  \\

  &  & 2.22905 & 3.06705 &  \\
\hline
 14 & 3 & 2.96697 & 3.61679 & 3.61679 \\

  &  & 2.67 & 3.47332 &  \\

  &  & 2.35895 & 3.22608 &  \\
\hline
 16 & 4 & 2.98817 & 3.62753 & 3.62753 \\

  &  & 2.72765 & 3.5172 &  \\

  &  & 2.45606 & 3.3286 &  \\

  &  & 2.1754 & 3.05359 &  \\
\hline
 18 & 4 & 3.00477 & 3.63497 & 3.63497 \\

  &  & 2.7727 & 3.54747 &  \\

  &  & 2.53152 & 3.39869 &  \\

  &  & 2.28381 & 3.18371 &  \\
\hline
\end{tabular}
&
\begin{tabular}{|c|c|c|c|c|}
\hline
2n & N & Z & $V\left(\eta(T_{2n})\right)$ & $V(T_{2n})$ \\
\hline
 -2 & 1 & \multicolumn{3}{|c|}{a torus knot} \\
\hline
 -4 & 1 & 2.40717 & 2.82812 & 2.82812 \\
\hline
 -6 & 1 & 2.67879 & 3.33174 & 3.33174 \\
\hline
 -8 & 2 & 2.80318 & 3.48666 & 3.48666 \\

  &  & 2.21583 & 2.92126 &  \\
\hline
 -10 & 2 & 2.87475 & 3.55382 & 3.55382 \\

  &  & 2.41665 & 3.21098 &  \\
\hline
 -12 & 3 & 2.9213 & 3.58891 & 3.58891 \\

  &  & 2.54513 & 3.35826 &  \\

  &  & 2.14593 & 2.95204 &  \\
\hline
 -14 & 3 & 2.95401 & 3.60954 & 3.60954 \\

  &  & 2.63466 & 3.44354 &  \\

  &  & 2.29908 & 3.15591 &  \\
\hline
 -16 & 4 & 2.97825 & 3.62268 & 3.62268 \\

  &  & 2.70071 & 3.49742 &  \\

  &  & 2.41076 & 3.28251 &  \\

  &  & 2.10986 & 2.96721 &  \\
\hline
 -18 & 4 & 2.99694 & 3.63157 & 3.63157 \\

  &  & 2.75147 & 3.53365 &  \\

  &  & 2.49601 & 3.36675 &  \\

  &  & 2.2329 & 3.12459 &  \\
\hline
\end{tabular}
\end{tabular}}
\label{tab1}
\end{table}
%%%%%%%%%%%%%%%%%%%%%%%%%%%%%%%%%%%%%%%%%%%%%%%%%%%%%%%%%%
\begin{table} \small
\caption{Volume of the hyperbolic twist knot cone-manifold, $V \left(T_{2n} (\frac{2 \pi}{k})\right)$ for $n$ between $1$ and $9$ and for $k$ between $1$ and $10$, and of its cyclic covering, $V \left(M_k (T_{2n})\right)$.
\medskip}
\begin{center}
\begin{tabular}{ll}
\begin{tabular}{|c|c|c|}
\hline
 $k$ & $V \left(T_2( \frac{2 \pi }{k})\right)$ & $V \left( M_k( T_2)\right)$ \\
\hline
 3 & \multicolumn{2}{|c|}{Euclidean}\\
\hline
 4 & 0.507471 & 2.02988 \\
 5 & 0.937207 & 4.68603 \\
 6 & 1.22129 & 7.32772 \\
 7 & 1.41175 & 9.88228 \\
 8 & 1.54386 & 12.3509 \\
 9 & 1.6386 & 14.7474 \\
 10 & 1.70857 & 17.0857\\
\hline
\end{tabular}
&
\begin{tabular}{|c|c|c|}
\hline
 $k$ & $V \left(T_4(\frac{2 \pi}{k})\right)$ & $V \left(M_k( T_4)\right)$ \\
\hline
  3 & 0.654246 & 1.96274 \\
 4 & 1.64974 & 6.59895 \\
 5 & 2.1789 & 10.8945 \\
 6 & 2.47479 & 14.8488 \\
 7 & 2.65528 & 18.587 \\
 8 & 2.77325 & 22.186 \\
 9 & 2.85453 & 25.6908 \\
 10 & 2.91289 & 29.1289 \\
\hline
\end{tabular}
\end{tabular}

\bigskip

\begin{tabular}{ll}
\begin{tabular}{|c|c|c|}
\hline
 $k$ &  $V \left(T_6(\frac{2 \pi}{k})\right)$ & $V \left(M_k (T_6)\right)$ \\
\hline
 3 & 1.13433 & 3.40299 \\
 4 & 2.1114 & 8.4456 \\
 5 & 2.5728 & 12.864 \\
 6 & 2.82779 & 16.9667 \\
 7 & 2.98374 & 20.8862 \\
 8 & 3.08602 & 24.6882 \\
 9 & 3.15669 & 28.4102 \\
 10 & 3.20752 & 32.0752 \\
\hline
\end{tabular}
&
\begin{tabular}{|c|c|c|}
\hline
 $k$ &   $V \left(T_8(\frac{2 \pi}{k})\right)$ & $V \left(M_k (T_8)\right)$ \\
\hline
 3 & 1.39476 & 4.18428 \\
 4 & 2.29275 & 9.17098 \\
 5 & 2.72019 & 13.6009 \\
 6 & 2.95903 & 17.7542 \\
 7 & 3.10589 & 21.7413 \\
 8 & 3.20251 & 25.6201 \\
 9 & 3.26938 & 29.4244 \\
 10 & 3.31754 & 33.1754 \\
\hline
\end{tabular}
\end{tabular}

\bigskip

\begin{tabular}{ll}
\begin{tabular}{|c|c|c|}
\hline
 $k$ &  $V \left(T_{10} (\frac{2 \pi}{k})\right)$ & $V \left( M_k (T_{10})\right)$ \\
\hline
 3 & 1.52984 & 4.58951 \\
 4 & 2.37879 & 9.51514 \\
 5 & 2.79028 & 13.9514 \\
 6 & 3.02165 & 18.1299 \\
 7 & 3.16431 & 22.1502 \\
 8 & 3.25831 & 26.0664 \\
 9 & 3.32342 & 29.9108 \\
 10 & 3.37035 & 33.7035 \\
\hline
\end{tabular}
&
\begin{tabular}{|c|c|c|}
\hline
 $k$ &  $V \left(T_{12} (\frac{2 \pi}{k})\right)$ & $V \left(M_k (T_{12})\right)$ \\
\hline
 3 & 1.60485 & 4.81454 \\
 4 & 2.42618 & 9.70474 \\
 5 & 2.82906 & 14.1453 \\
 6 & 3.05637 & 18.3382 \\
 7 & 3.19675 & 22.3773 \\
 8 & 3.28932 & 26.3145 \\
 9 & 3.35347 & 30.1812 \\
 10 & 3.39973 & 33.9973 \\
\hline
\end{tabular}
\end{tabular}

\bigskip

\begin{tabular}{ll}
\begin{tabular}{|c|c|c|}
\hline
 $k$ &  $V \left(T_{14} (\frac{2 \pi}{k})\right)$ & $V \left(M_k (T_{14})\right)$ \\
\hline
 3 & 1.65032 & 4.95096 \\
 4 & 2.45507 & 9.82028 \\
 5 & 2.85276 & 14.2638 \\
 6 & 3.07763 & 18.4658 \\
 7 & 3.21663 & 22.5164 \\
 8 & 3.30832 & 26.4666 \\
 9 & 3.37191 & 30.3472 \\
 10 & 3.41775 & 34.1775 \\
\hline
\end{tabular}
&
\begin{tabular}{|c|c|c|}
\hline
 $k$ &  $V \left(T_{16} (\frac{2 \pi}{k})\right)$ & $V \left(M_k (T_{16})\right)$ \\
\hline
 3 & 1.67992 & 5.03976 \\
 4 & 2.47398 & 9.89592 \\
 5 & 2.86831 & 14.3415 \\
 6 & 3.09158 & 18.5495 \\
 7 & 3.22967 & 22.6077 \\
 8 & 3.3208 & 26.5664 \\
 9 & 3.38401 & 30.4561 \\
 10 & 3.42959 & 34.2959 \\
\hline
\end{tabular}
\end{tabular}

\bigskip

\begin{tabular}{|c|c|c|}
\hline
 $k$ &  $V \left(T_{18} (\frac{2 \pi}{k})\right)$ & $V \left(M_k (T_{18})\right)$ \\
\hline
 3 & 1.70026 & 5.10079 \\
 4 & 2.48704 & 9.94814 \\
 5 & 2.87906 & 14.3953 \\
 6 & 3.10124 & 18.6074 \\
 7 & 3.23869 & 22.6708 \\
 8 & 3.32948 & 26.6358 \\
 9 & 3.39265 & 30.5339 \\
 10 & 3.43779 & 34.3779 \\
\hline
\end{tabular}
\end{center}
\label{table2-1}
\end{table}
%%%%%%%%%%%%%%%%%%%%%%%%%%%%%%%%%%%%%%%%%%%%%%%%%%%%%%%%%
\begin{table}
\caption{Volume of the hyperbolic twist knot cone-manifold, $V \left(T_{2n} (\frac{2 \pi}{k})\right)$ for $n$ between $-9$ and $-2$ and for $k$ between $1$ and $10$, and of its cyclic covering, $V \left(M_k (T_{2n})\right)$.
\medskip}
\label{table2-2}
\begin{tabular}{ll}
\begin{tabular}{|c|c|c|}
\hline
 $k$ &  $V \left(T_{-4} (\frac{2 \pi}{k})\right)$ & $V \left(M_k (T_{-4})\right)$ \\
\hline
 3 & 0.314236 & 0.942707 \\
 4 & 1.18737 & 4.7495 \\
 5 & 1.72248 & 8.61241 \\
 6 & 2.04253 & 12.2552 \\
 7 & 2.24401 & 15.7081 \\
 8 & 2.37774 & 19.022 \\
 9 & 2.47065 & 22.2358 \\
 10 & 2.53766 & 25.3766 \\
\hline
\end{tabular}

&
\begin{tabular}{|c|c|c|}
\hline
 $k$ &  $V \left(T_{-6}(\frac{2 \pi}{k})\right)$ & $V \left(M_k (T_{-6})\right)$ \\
\hline
 3 & 0.927278 & 2.78183 \\
 4 & 1.93717 & 7.74869 \\
 5 & 2.42921 & 12.1461 \\
 6 & 2.70001 & 16.2001 \\
 7 & 2.865 & 20.055 \\
 8 & 2.97296 & 23.7837 \\
 9 & 3.04743 & 27.4269 \\
 10 & 3.10095 & 31.0095 \\
\hline
\end{tabular}
\end{tabular}

\bigskip

\begin{tabular}{ll}
\begin{tabular}{|c|c|c|}
\hline
 $k$ &  $V \left(T_{-8}(\frac{2 \pi}{k})\right)$ & $V \left(M_k (T_{-8})\right)$ \\
\hline
 3 & 1.28595 & 3.85786 \\
 4 & 2.22061 & 8.88245 \\
 5 & 2.6616 & 13.308 \\
 6 & 2.9068 & 17.4408 \\
 7 & 3.05724 & 21.4007 \\
 8 & 3.15609 & 25.2487 \\
 9 & 3.22445 & 29.0201 \\
 10 & 3.27366 & 32.7366 \\
\hline
\end{tabular}
&
\begin{tabular}{|c|c|c|}
\hline
 $k$ &  $V \left(T_{-10}(\frac{2 \pi}{k})\right)$ & $V \left(M_k (T_{-10})\right)$ \\
\hline
 3 & 1.47286 & 4.41857 \\
 4 & 2.34274 & 9.37094 \\
 5 & 2.76087 & 13.8044 \\
 6 & 2.99535 & 17.9721 \\
 7 & 3.13977 & 21.9784 \\
 8 & 3.23486 & 25.8789 \\
 9 & 3.3007 & 29.7063 \\
 10 & 3.34815 & 33.4815 \\
\hline
\end{tabular}
\end{tabular}

\bigskip

\begin{tabular}{ll}
\begin{tabular}{|c|c|c|}
\hline
 $k$ &  $V \left(T_{-12}(\frac{2 \pi}{k})\right)$ & $V \left(M_k (T_{-12})\right)$ \\
\hline
 3 & 1.57236 & 4.71709 \\
 4 & 2.40564 & 9.62256 \\
 5 & 2.81223 & 14.0612 \\
 6 & 3.0413 & 18.2478 \\
 7 & 3.18267 & 22.2787 \\
 8 & 3.27585 & 26.2068 \\
 9 & 3.34042 & 30.0638 \\
 10 & 3.38696 & 33.8696 \\
\hline
\end{tabular}

&
\begin{tabular}{|c|c|c|}
\hline
 $k$ &  $V \left(T_{-14}(\frac{2 \pi}{k})\right)$ & $V \left(M_k (T_{-14})\right)$ \\
\hline
 3 & 1.63018 & 4.89055 \\
 4 & 2.44226 & 9.76903 \\
 5 & 2.84224 & 14.2112 \\
 6 & 3.06819 & 18.4091 \\
 7 & 3.2078 & 22.4546 \\
 8 & 3.29988 & 26.3991 \\
 9 & 3.36371 & 30.2734 \\
 10 & 3.40974 & 34.0974 \\
\hline
\end{tabular}
\end{tabular}

\bigskip

\begin{tabular}{ll}
\begin{tabular}{|c|c|c|}
\hline
$k$ &  $V \left(T_{-16}(\frac{2 \pi}{k})\right)$ & $V \left(M_k (T_{-16})\right)$ \\
\hline
 3 & 1.66659 & 4.99977 \\
 4 & 2.46545 & 9.8618 \\
 5 & 2.86129 & 14.3065 \\
 6 & 3.08528 & 18.5117 \\
 7 & 3.22379 & 22.5665 \\
 8 & 3.31517 & 26.5213 \\
 9 & 3.37856 & 30.407 \\
 10 & 3.42425 & 34.2425 \\
\hline
\end{tabular}
&
\begin{tabular}{|c|c|c|}
\hline
$k$ &  $V \left(T_{-18}(\frac{2 \pi}{k})\right)$ & $V \left(M_k (T_{-18})\right)$ \\
\hline
 3 & 1.69098 & 5.07294 \\
 4 & 2.48107 & 9.92429 \\
 5 & 2.87415 & 14.3707 \\
 6 & 3.09683 & 18.581 \\
 7 & 3.23457 & 22.642 \\
 8 & 3.32549 & 26.6039 \\
 9 & 3.38869 & 30.4982 \\
 10 & 3.43405 & 34.3405 \\
\hline
\end{tabular}
\end{tabular}
\end{table}

\section*{Acknowledgments}
The authors would like to thank Prof. Hyuk Kim for his various helps and anonymous referees for their careful suggestions.

\end{document}